\documentclass{amsart}

\usepackage{amssymb}
\usepackage{times}
\usepackage{amsmath}
\usepackage{amsthm}
\usepackage{graphicx}
\usepackage{cases}
\usepackage{cite}

\newtheorem{theorem}{Theorem}[section]
\newtheorem{lemma}[theorem]{Lemma}
\newtheorem{proposition}[theorem]{Proposition}
\newtheorem{corollary}[theorem]{Corollary}
\newtheorem{remark}[theorem]{Remark}
\renewcommand{\P}{\mathord{\mathbb  P}}

\begin{document}

\title{Degeneration of Fermat hypersurfaces in positive characteristic }
\author{THANH HOAI HOANG}
\address{Department of Mathematics, Graduate School of Science, Hiroshima University, 1-3-1 Kagamiyama, Higashi-Hiroshima, 739-8526 JAPAN.}
\email{hoangthanh2127@yahoo.com}
\begin{abstract}
We work over an algebraically closed field $k$ of positive characteristic $p$. Let $q$ be a power of $p$. 
Let $A$ be an $(n+1)\times (n+1)$ matrix with coefficients $a_{ij}$ in $k$, and let $X_A$ be a hypersurface of degree $q+1$ in the projective space $\P ^n$ defined by $\sum a_{ij}x_ix_j^q=0$. It is well-known that if the rank of $A$ is $n+1$, the hypersurface $X_A$ is projectively isomorphic to the Fermat hypersuface of degree $q+1$. We investigate the hypersurfaces $X_A$ when the rank of $A$ is $n$, and determine their projective isomorphism classes. 
\end{abstract}
\keywords{Degeneration, Fermat hypersurface, positive characteristic}
\subjclass{Primary 14J70, secondary 14J50}
\maketitle
\section {Introduction}
We work over an algebraically closed field $k$ of positive characteristic $p$. Let $q$ be a power of $p$. Let $n$ be a positive integer. We denote by $M_{n+1}(k)$ the set of square matrices of size $n+1$ with coefficients in $k$. For a nonzero matrix  $A=(a_{ij})_{0\leq i,j\leq n} \in M_{n+1}(k)$, we denote by $X_A$ the hypersurface of degree $q+1$ defined by the equation $$\sum a_{ij}x_i x_j^q =0$$ in the projective space $\P ^n$ with homogeneous coordinates $(x_0,x_1,\dots ,x_n)$. The following is well-known (\cite{Beau86}, \cite{Lang56}, \cite{Shi01}, see also \S 4 of this paper).
\begin{proposition}\label{shimadaprop}
Let $A=(a_{ij})_{0 \leq i,j \leq n}\in M_{n+1}(k)$ and $X_A\subset \P ^n$ be as above. Then the following conditions are equivalent:
\begin{enumerate}
\item[(i)]  ${\rm rank}(A) =n+1,$
\item[(ii)] $X_A$ is smooth,
\item[(iii)] $X_A$ is isomorphic to the Fermat hypersurface of degree $q+1$, and
\item[(iv)] there exists a linear transformation of coordinates $T \in GL_{n+1}(k)$ such that $^tTAT^{(q)}=I_{n+1}$, where ${^t}T$ is the transpose of $T$, $T^{(q)}$ is the matrix obtained from $T$ by raising each coefficient to its $q$-th power, and $I_{n+1}$ is the identity matrix. 
\end{enumerate}
\end{proposition}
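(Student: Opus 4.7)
The plan is to establish the cycle $(iv) \Rightarrow (iii) \Rightarrow (ii) \Rightarrow (i) \Rightarrow (iv)$; the last implication is the main content, while the others reduce to direct computations in characteristic $p$.

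For $(iv) \Rightarrow (iii)$ I would simply substitute $x = Ty$ into the defining polynomial: since $(Ty)^{(q)} = T^{(q)} y^{(q)}$, the expression becomes $y^{t}(T^{t}AT^{(q)})y^{(q)} = y^{t}I_{n+1}y^{(q)} = \sum_i y_i^{q+1}$, which is the Fermat equation. For $(iii) \Rightarrow (ii)$, the gradient of $\sum y_i^{q+1}$ has entries $(q+1)y_i^q$; since $q$ is a power of $p$, the integer $q+1$ is coprime to $p$, so the gradient vanishes only at the origin and $X_A$ is smooth. For $(ii) \Rightarrow (i)$ I would compute the Jacobian of $F = \sum a_{ij}x_ix_j^q$: the factors $x_j^q$ have zero derivative in characteristic $p$, leaving $\partial F/\partial x_k = \sum_j a_{kj}x_j^q$, so the singular locus of $X_A$ consists of the projective points $[x]$ with $Ax^{(q)} = 0$ (the condition $F(x)=0$ is automatic there). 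If $\mathrm{rank}(A) < n+1$, I pick $v \ne 0$ with $Av=0$ and set $x_i = v_i^{1/q}$ (possible since $k$ is perfect) to exhibit a singular point.

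The heart of the argument is $(i) \Rightarrow (iv)$. Setting $S = T^{-1}$, this is equivalent to surjectivity of the morphism $\psi: GL_{n+1}(k) \to GL_{n+1}(k)$, $S \mapsto S^{t}S^{(q)}$. My plan is to apply the Lang--Steinberg theorem to the endomorphism $\tau: T \mapsto (T^{(q)})^{-t}$. The involution $\sigma(T) = T^{-t}$ is a group automorphism of $GL_{n+1}$ that commutes with the $q$-Frobenius $F$, so $\tau = F \circ \sigma$ is a group endomorphism satisfying $\tau^{2} = F^{2}$, the $q^{2}$-Frobenius; in particular $\tau$ is a Steinberg endomorphism, and its fixed-point group $\{T : T^{t}T^{(q)} = I_{n+1}\}$ is contained in the finite group $GL_{n+1}(\mathbb{F}_{q^2})$. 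Lang--Steinberg therefore gives surjectivity of $L_\tau: T \mapsto T^{-1}\tau(T) = T^{-1}(T^{(q)})^{-t}$, and the substitution $T = S^{-t}$ yields $L_\tau(T) = S^{t}S^{(q)} = \psi(S)$, so $\psi$ is surjective as required.

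The main obstacle to a more elementary, self-contained proof of $(i) \Rightarrow (iv)$ is the sesquilinearity of the form $\langle v,w \rangle = v^{t}Aw^{(q)}$. A naive Gram--Schmidt can indeed locate a vector $v_0$ with $\langle v_0,v_0 \rangle = 1$ — the polynomial $v^{t}Av^{(q)}$ is not identically zero because its monomials $a_{ij}v_iv_j^q$ are pairwise distinct, and one rescales by $\mu$ with $\mu^{q+1} = \langle v,v \rangle^{-1}$, which exists since $k$ is algebraically closed — but the left-orthogonal hyperplane $\{w : w^{t}Av_0^{(q)} = 0\}$ and the right-orthogonal hyperplane $\{w : v_0^{t}Aw^{(q)} = 0\}$ of $v_0$ are generically distinct, obstructing a clean reduction to an $n \times n$ subproblem. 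The Lang--Steinberg route circumvents this by operating directly at the level of the connected algebraic group $GL_{n+1}$.
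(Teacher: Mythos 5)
Your proof is correct, and all four implications are handled soundly: the substitution $x=Ty$ for (iv)$\Rightarrow$(iii), the gradient $(q+1)y_i^q$ with $q+1\equiv 1\pmod p$ for (iii)$\Rightarrow$(ii), and the identification of the singular locus with $\{Ax^{(q)}=0\}$ plus the $q$-th root trick for (ii)$\Rightarrow$(i) are exactly what the paper has in mind when it calls these implications ``clear.'' For the key implication (i)$\Rightarrow$(iv) you and the paper exploit the same mechanism --- the Frobenius kills differentials on the connected group $GL_{n+1}$ --- but package it differently. The paper argues directly that each map $f_B(T)={}^tTBT^{(q)}$ has everywhere surjective differential (since $d(T\mapsto T^{(q)})=0$), hence is dominant, so the images of $f_A$ and $f_I$ both contain dense open subsets of the irreducible variety $GL_{n+1}(k)$ and therefore meet; solving $f_A(T_1)=f_I(T_2)$ gives $T=T_1T_2^{-1}$. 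You instead reduce, via the substitution $T=S^{-t}$, to the surjectivity of the Lang map $T\mapsto T^{-1}\tau(T)$ for the Steinberg endomorphism $\tau(T)=({}^tT^{(q)})^{-1}$ and invoke Lang--Steinberg; your verification that $\tau$ qualifies ($\tau^2=F^2$, so the fixed-point group sits inside the finite group $GL_{n+1}(\mathbb F_{q^2})$) is correct. The trade-off is that the paper's version is self-contained in a few lines (it is in effect the proof of Lang's theorem specialized to this situation, following Serre), whereas yours outsources that step to a standard citation; your closing observation that a naive Gram--Schmidt stalls because the form $\langle v,w\rangle={}^tvAw^{(q)}$ is not Hermitian over an algebraically closed field (left and right orthogonal complements differ) correctly explains why some such group-theoretic input is needed.
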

The Fermat hypersurface of degree $q+1$ defined over an algebraically closed field of positive characteristic $p$ has been a subject of numerous papers. It has many interesting properties, such as supersingularity (\cite{Shioda74},\cite{ShioKat},\cite{Tate65}), or unirationality (\cite{Shi92},\cite{Shioda74},\cite{ShioKat}). Moreover, the hypersurface $X_A$ associated with the matrix $A$ with coefficients $a_{ij}$ in the finite field $\mathbb F_{q^2}$, which is called a Hermitian variety, has also been studied for many applications, such as coding theory (\cite{TomLint}). (The general results on Hermitian varieties are due to Segre \cite{Segre65}; see also \cite{Hirsch98}). Therefore it is important  to extend these studies to degenerate cases. 

In the case where characteristic $p\neq 2$, the following is well-known and can be found in any standard textbook on quadratic forms: the hypersurface defined by the quadratic form $\sum a_{ij}x_ix_j=0$ is projectively isomorphic to the hypersurface defined by
$$x_0^2+\cdots +x_{r-1}^2=0, $$ where $r$ is the rank of $A=(a_{ij})$. This result has been extended the case of characteristic 2 (see \cite{Dolgachev12}). Therefore we have a question what is the normal form of the  hypersurfaces defined by a form $\sum a_{ij}x_ix_j^q=0$. 
When $A$ satisfies ${^t}A=A^{(q)}$ and hence this form is the Hermitian form over $\mathbb{F} _q$, the hypersurface $X_A$ is projectively isomorphic over $\mathbb{F}_{q^2}$ to $$x_0^{q+1}+\cdots +x_{r-1}^{q+1}=0,$$
where $r$ is the rank of $A$ (\cite{Hirsch91}). 

In this paper, we classify the hypersurfaces $X_A$ associated with the matrices $A$ of rank $n$ \emph{over an algebraically closed field}. Note that two hypersurfaces $X_A, \ X_{A'}$ associated with the matrices $A, A'$ are projectively isomorphic if and only if there exists a linear transformation $T\in GL_{n+1}(k)$ such that $A' =$  $^{t}TAT^{(q)}$. In this case, we denote $A\sim A'$.

We define $I_s$ to be the $s\times s$ identity matrix, and $E_r$ to be the $r\times r$ matrix 
$$\begin{pmatrix}
0 & 0 & \cdots & 0 \\
1 & 0 & \cdots & 0 \\
\vdots & \ddots & \ddots & \vdots \\
0 & \cdots & 1 &0 \\
\end{pmatrix}.$$
In particular, $E_1=(0)$ and $E_0$ is the $0\times 0$ matrix. Throughout this paper, a blank in a block decomposition of a matrix means that all the components of the block are $0$. Our main result is as follow.

\begin{theorem}\label{maintheorem}
Let $A=(a_{ij})_{0\leq i,j\leq n}$ be a nonzero matrix in $M_{n+1}(k)$, and let $X_A$ be the hypersurface of degree $q+1$ defined by $\sum a_{ij}x_i x_j^q =0$ in the projective space $\P ^n$ with homogeneous coordinates $(x_0,x_1,\dots ,x_n)$. Suppose that the rank of A is $n$. Then the hypersurface $X_A$ is projectively isomorphic to one of the hypersurfaces $X_s$ associated with the matrices
\begin{equation}
W_s= \left ( 
\begin{array}{c|c}
I_s &  \\ \hline
 & E_{n-s+1}\\
\end{array} \right ) \nonumber ,
\end{equation}
where $0\leq s \leq n$. Moreover, if $s\neq s'$, then $X_s$ and $X_{s'}$ are not projectively isomorphic.
\end{theorem}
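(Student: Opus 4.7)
The plan is to prove existence and uniqueness of the index $s$ separately.

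For existence, I will construct $T \in GL_{n+1}(k)$ explicitly. Since $\mathrm{rank}(A)=n$, the right kernel $\ker(A)$ is one-dimensional, spanned by some $v$. Choose $T_1$ with $T_1^{(q)} e_n = v$, which is possible because Frobenius is bijective on the algebraically closed field $k$; then the last column of ${}^t T_1 A T_1^{(q)}$ vanishes. A parallel normalization applied to the left kernel $\ker({}^tA)$ brings some row of the resulting matrix, say row $s$, to zero; this $s$ will turn out to be the index appearing in $W_s$. When $s=n$, the remaining $n \times n$ block has rank $n$, and Proposition~\ref{shimadaprop} applied to this block gives $A \sim W_n$. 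When $s<n$, the surviving entries form a chain linking the zero row and the zero column; using block-decomposed $T$'s that preserve the normalizations already established, I peel off one chain step at a time (either a diagonal $I$-type entry or a subdiagonal $E$-type entry) and reduce to a smaller instance of the same problem, invoking Proposition~\ref{shimadaprop} on the invertible remainder to reach $W_s$ for an appropriate $s<n$.

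For uniqueness, I proceed by induction on $n$ using projective invariants of $X_A$ at its singular point. Since the partial derivatives of the form $F=\sum a_{ij}x_i x_j^q$ are the components of $Ax^{(q)}$, the singular locus of $X_A$ is cut out by $A x^{(q)} = 0$, so $X_A$ has a single singular point $P = [v^{(1/q)}]$, where $v$ spans $\ker(A)$. Reading off the lowest-degree term of $F$ in affine coordinates around $P$, the multiplicity of $X_A$ at $P$ is $q+1$ when $s=n$ (the tangent cone is the Fermat cone) and $q$ when $s<n$ (the tangent cone is a hyperplane of multiplicity $q$); this distinguishes $W_n$ from the remaining cases. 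For $s,s'<n$, I intersect $X_{W_s}$ with the reduced tangent hyperplane $H=\{x_{n-1}=0\}$ at $P$: substitution in the normal form gives a form independent of $x_n$, so $X_{W_s}\cap H$ is a cone with vertex $[e_n]$ over a hypersurface $Y \subset \P^{n-2}$. A direct check identifies $Y$ as the $W_s$-hypersurface in dimension $n-2$ with the same index $s$ when $s\le n-2$, and as the smooth Fermat hypersurface when $s=n-1$. Applying the inductive hypothesis on $n$ to $Y$ then recovers $s$.

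The principal obstacle is the Frobenius-twisted coupling between ${}^tT$ (acting on rows) and $T^{(q)}$ (acting on columns). On the existence side, this prevents performing row and column operations independently and forces a careful, block-structured reduction at each step. On the uniqueness side, it means that familiar linear-algebraic invariants of $A$, such as its Jordan type or the rank of $A^k$, are not preserved by the equivalence; one is therefore forced to use intrinsic geometric invariants of $X_A$, namely the multiplicity and the iterated tangent-hyperplane sections at the singular point, as described in the plan.
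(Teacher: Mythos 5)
The uniqueness half of your argument is sound and is essentially the paper's: the paper also fixes the unique singular point $P_0$, and its invariants $H_s=\{F_q=0\}$ and $V_s=\{F_q=0,\,F_{q+1}=0\}$ (extracted there from the fibers of the projection away from $P_0$) carry exactly the same information as your multiplicity at $P_0$ together with the base of the cone $X_s\cap H$; both versions then induct on $n$ with the same identification of $V_s$ as $X_s^{n-2}$ (for $s\le n-2$) or a smooth Fermat (for $s=n-1$). You should still record the base cases $n=1,2$ and the observation that $X_s^{n-2}$ is singular (so it cannot be confused with the smooth Fermat arising when $s'=n-1$), but these are routine.

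The genuine gap is in the existence half. After normalizing the right kernel so that the last column of ${}^tT_1AT_1^{(q)}$ vanishes, everything that remains to be done is concentrated in the sentence ``I peel off one chain step at a time \dots and reduce to a smaller instance of the same problem,'' and that is precisely the technical heart of the paper (Lemmas 2.2--2.6, each requiring an explicit block transformation). Two specific problems. First, your claim that the index of the row killed by the ``parallel normalization'' of the left kernel ``will turn out to be the index appearing in $W_s$'' is not correct: when ${}^tw$ and $v^{(1/q)}$ are independent you may send the left kernel to \emph{any} $e_s$ with $s\ne n$, so that index is a choice, not an invariant; the true $s$ only emerges at the end of the iterated reduction. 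Second, the reduction itself is not a straightforward peeling: in the paper's Lemma 2.5, the matrix $P_s$ (identity block plus one extra nonzero row $\mathbf a$ feeding into the nilpotent chain) reduces to $W_s$ when $n-s+1$ is \emph{even} but drops to a strictly smaller block $B_{s-1}$ when $n-s+1$ is \emph{odd} -- a parity phenomenon caused exactly by the Frobenius twist you identify as the obstacle, and one that your scheme has no mechanism to produce or even detect. Without an argument replacing Lemmas 2.2--2.6 (or at least a precise induction statement whose step you can verify), the existence claim is a plan rather than a proof.
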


\begin{corollary}\label{cor1}
If $A$ is a general point of $\{A\in M_{n+1}(k) |\;  \text{rank}(A)=n\}$, then $A\sim W_{n-1}$.
\end{corollary}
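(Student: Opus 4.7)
The plan is to prove the corollary by showing that the $\sim$-equivalence class of $W_{n-1}$ is dense in the rank-$n$ stratum of $M_{n+1}(k)$; a general point of that stratum will then automatically lie in it.

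First, the locus $V := \{A \in M_{n+1}(k) : \mathrm{rank}(A) = n\}$ is the open subset of the irreducible hypersurface $\{\det A = 0\}$ where the rank equals $n$, hence is irreducible of dimension $(n+1)^2 - 1$. By Theorem \ref{maintheorem}, $V$ is the disjoint union of the $n+1$ orbits $\mathcal{O}_s := \{A : A \sim W_s\}$, and each $\mathcal{O}_s$ is locally closed in $V$ as an orbit of an algebraic group action. Exactly one orbit is therefore open and dense in $V$---the one of maximal dimension---so it suffices to prove that $\dim \mathcal{O}_{n-1} = (n+1)^2 - 1$. By the fiber-dimension theorem applied to the orbit map $GL_{n+1}(k) \to V$, $T \mapsto {}^tT W_{n-1} T^{(q)}$, this is equivalent to proving that the stabilizer
\[
\Phi := \{T \in GL_{n+1}(k) : {}^tT W_{n-1} T^{(q)} = W_{n-1}\}
\]
has dimension $1$.

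To compute $\Phi$, I would first observe that $\ker(W_{n-1}) = \langle e_n \rangle$ and $\ker({}^tW_{n-1}) = \langle e_{n-1}\rangle$. The stabilizer relation, together with the injectivity of the $q$-th power map on $k$, forces the $(n-1)$-st and $n$-th columns of $T$ to be scalar multiples of $e_{n-1}$ and $e_n$ respectively. Write $T$ in the corresponding block form $T = \begin{pmatrix} U & 0 \\ V & \mathrm{diag}(\alpha, \beta) \end{pmatrix}$ with $U \in GL_{n-1}(k)$ and $V \in M_{2, n-1}(k)$, and expand ${}^tT W_{n-1} T^{(q)} = W_{n-1}$ block-by-block. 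The equation reduces to the three conditions
\[
V = 0, \qquad \alpha^q \beta = 1, \qquad {}^tU U^{(q)} = I_{n-1}.
\]
The subscheme of $GL_{n-1}(k)$ defined by the last equation is non-empty by Proposition \ref{shimadaprop} applied in dimension $n-1$, and is zero-dimensional because the differential of $U \mapsto U^{(q)}$ vanishes in characteristic $p$, which makes the differential $dU \mapsto {}^t(dU) \cdot U^{(q)}$ of $U \mapsto {}^tU U^{(q)}$ injective at every point. Hence $\Phi$ is one-dimensional, parametrized by $\alpha \in k^*$.

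The main obstacle is the block-matrix reduction together with the tangent-space subtlety: because the Frobenius twist has trivial linearization, a naive Zariski-tangent-space computation of $\Phi$ at the identity would yield an apparent dimension of $n+1$, not $1$. One must therefore work with the full nonlinear matrix equation, and it is precisely the rigidity of the ``unitary-type'' equation ${}^tU U^{(q)} = I_{n-1}$ that cuts the dimension down to the correct value.
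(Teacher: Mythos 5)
Your proof is correct, and it takes a genuinely different route from the paper's. The paper gets the corollary essentially for free from the machinery of Theorem \ref{maintheorem}: after moving the singular point to $(0,\dots,0,1)$ the matrix becomes $B_n$ with top-left block $D_n$, the generic condition is $\det D_n\neq 0$, and the first paragraph of the proof of Lemma \ref{lemma5} together with Lemma \ref{lemma4} then drives such a matrix to $W_{n-1}$; this is short and exhibits the explicit open condition cutting out the dense class. You instead run a dimension count: the rank-$n$ locus is irreducible of dimension $(n+1)^2-1$, the classes are finitely many locally closed orbits of $T\cdot A={}^tTAT^{(q)}$, and the dense one is identified by computing the stabilizer of $W_{n-1}$. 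Your stabilizer computation checks out: the kernels of $W_{n-1}$ and ${}^tW_{n-1}$ force the claimed block shape, the block equations give $V=0$, $\alpha^q\beta=1$ and ${}^tUU^{(q)}=I_{n-1}$, and the last equation defines a finite group (its solutions satisfy $U^{(q^2)}=U$, hence lie in $GL_{n-1}(\mathbb{F}_{q^2})$); your injective-differential argument for zero-dimensionality is also valid, and you rightly flag that the Zariski tangent space of the stabilizer scheme at the identity is $(n+1)$-dimensional, so the reduced stabilizer must be read off from the nonlinear equations rather than linearized. Two remarks: the orbit map is inseparable, but the fiber-dimension theorem is insensitive to this, so $\dim\mathcal{O}_{n-1}=(n+1)^2-1$ stands; and you do not actually need the full classification of Theorem \ref{maintheorem}, since a constructible subset of full dimension in an irreducible variety already contains a dense open subset. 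Your approach is longer but more conceptual, and the stabilizer you compute is, up to scalars, exactly the group ${\rm Aut}(X_{n-1})$ appearing in Theorem \ref{auttheorem}.
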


\begin{corollary}\label{cor2}
Suppose that $n\geq 2, s<n$ and $(n,s)\neq (2,0)$. Then $X_s$ is rational.
\end{corollary}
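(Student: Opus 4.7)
The plan is to realize $X_s$ as birational to $\P^{n-1}$ via the linear projection from the point $p=(0{:}\cdots{:}0{:}1)$. First, I would write the defining equation of $X_s$ explicitly from the block form of $W_s$: the diagonal $I_s$-block contributes the Fermat part $x_0^{q+1}+\cdots+x_{s-1}^{q+1}$, while the subdiagonal block $E_{n-s+1}$ on the indices $s,\ldots,n$ contributes the cross terms $x_{s+1}x_s^q+x_{s+2}x_{s+1}^q+\cdots+x_n x_{n-1}^q$. The crucial observation is that, since $s<n$, the variable $x_n$ appears only in the single term $x_n x_{n-1}^q$, so the defining polynomial $F_s$ is \emph{linear} in $x_n$ with coefficient $x_{n-1}^q$.

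Next I would note that $p\in X_s$ and consider the projection $\pi : X_s \dashrightarrow \P^{n-1}$ sending $(x_0{:}\cdots{:}x_n)\mapsto (x_0{:}\cdots{:}x_{n-1})$, which is defined off $p$. Writing $F_s = x_n\,x_{n-1}^q + G_s(x_0,\ldots,x_{n-1})$, we see that for any point of $\P^{n-1}$ with $x_{n-1}\neq 0$, the value $x_n = -G_s/x_{n-1}^q$ is the unique preimage in $X_s$. Hence $\pi$ has generic degree one and is birational, provided $X_s$ is an irreducible variety; rationality of $X_s$ then follows.

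The main (and essentially the only) technical obstacle is verifying the irreducibility of $F_s$, and this is precisely where the excluded case $(n,s)=(2,0)$ intervenes. A nontrivial factorization $F_s = f\cdot g$ with $x_n$ appearing in $f$ forces $g \in k[x_0,\ldots,x_{n-1}]$ to divide both the coefficient $x_{n-1}^q$ of $x_n$ and the remainder $G_s$; hence $g$ is a scalar multiple of $x_{n-1}^m$, and a nontrivial splitting requires $x_{n-1}\mid G_s$. When $s\geq 1$, the monomial $x_0^{q+1}$ obstructs this, and when $s=0$ with $n\geq 3$, the monomial $x_1 x_0^q$ does. Only in the case $(n,s)=(2,0)$ does one actually find the factorization $x_1 x_0^q + x_2 x_1^q = x_1(x_0^q+x_2 x_1^{q-1})$, which is why the statement excludes it.
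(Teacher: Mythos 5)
Your argument is correct and is essentially the paper's proof: both project from the unique singular point $P_0=(0:\cdots:0:1)$, use that the defining equation of $X_s$ is linear in $x_n$ with coefficient $x_{n-1}^q$, and write down the same rational inverse $(y_0,\dots,y_{n-1})\mapsto (y_0,\dots,y_{n-1},-G_s/y_{n-1}^q)$. The only divergence is in how irreducibility is checked: you give a direct factorization (Gauss's lemma) argument on the polynomial, while the paper deduces it from the locus $V_s=\{F_q=0,\ F_{q+1}=0\}$ having codimension two together with induction on $n$ --- but both reduce to the same key fact that $x_{n-1}$ does not divide $F_{q+1}$, which is exactly where the hypothesis $(n,s)\neq(2,0)$ enters.
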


We also determine the automorphism group
$${\rm Aut}(X_s)=\{ g\in PGL_{n+1}(k) \mid g(X_s)=X_s \},$$
of the hypersurface $X_s$ for each $s$. For $M\in GL_{n+1}(k)$, we denote by $[M]\in PGL_{n+1}(k)$ the image of $M$ by the natural projection.

\begin{theorem}\label{auttheorem}
Let $X_s$ be the hypersurface associated with the matrix $W_s$ in the projective space $\P ^n$. The projective automorphism group ${\rm Aut}(X_s)$ with $s\leq n-2$ is the group consisting of $[M]$, with
$$M=\left ( 
\begin{array}{c|c|c}
T  & {^t}\mathbf a & 0 \\ \hline
0  & d & 0 \\ \hline
\mathbf c  & e & 1 \\
\end{array} \right ) ,$$
where $T\in GL_{n-1}(k)$, $\mathbf a,\mathbf c$ are row vectors of dimension $n-1$, and $d,e\in k$, and they satisfy the following conditions:
\begin{enumerate}
\item[(i)] $[T]\in {\rm Aut}(X_s^{n-2})$, ${^t}TW'_{s}T^{(q)}=\delta W'_{s}$, $\delta =\delta ^q\neq 0$, where $X_s^{n-2}$ is the hypersurface defined in $\P ^{n-2}$ by the matrix
$$W'_s=\left ( \begin{array}{c|c} I_s &  \\ \hline  & E_{n-s-1} \\ \end{array} \right )$$
\item[(ii)] $d=\delta ,$
\item[(iii)] $[\mathbf aW'_s+d(0, \cdots ,0,1)]\cdot T^{(q)}=\delta (0,\cdots ,0,1)$,
\item[(iv)] ${^t}TW'_s \cdot {^t}\mathbf a^{(q)}+{^t}\mathbf cd^q=0$,
\item[(v)] $[\mathbf aW'_s+d(0,\cdots ,0,1)]\cdot {^t}\mathbf a^{(q)}+ed^q=0.$
\end{enumerate}
Moreover, we have
$${\rm Aut}(X_n)=\left \{ \left [ \begin{array}{c|c} T_n &  \\ \hline \mathbf u & 1 \\ \end{array} \right ] \left | \ \  \parbox[c]{6cm}{${^t}T_nT_n^{(q)}=\lambda I_n, T_n\in GL_n(k), \ \lambda \neq 0, \\ \mathbf u \ \text{is a row vector of dimension}\ n $ } \right. \right \} ,$$ and 
$${\rm Aut}(X_{n-1})=\left \{ \left [ \begin{array}{c|c|c} T_{n-1} &  &  \\ \hline  & \beta &  \\ \hline  &  &1 \\ \end{array} \right ]\left |  \ \ \parbox[c]{5cm}{${^t}T_{n-1}T_{n-1}^{(q)}=\beta ^qI_{n-1}, \\ T_{n-1}\in GL_{n-1}(k),\ 0\neq \beta \in k $} \right. \right \} $$
\end{theorem}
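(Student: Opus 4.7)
The approach is to combine two geometric invariants of $X_s$---its singular locus and its tangent cone at the unique singular point---with a direct block-matrix computation that extracts the algebraic conditions of the theorem.

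First, I would compute the Jacobian of $F_s = \sum_{i<s} x_i^{q+1} + \sum_{i=s+1}^{n} x_i x_{i-1}^q$ in characteristic $p$: the partials are $\partial F_s/\partial x_i = x_i^q$ for $i<s$, $\partial F_s/\partial x_s = 0$, and $\partial F_s/\partial x_i = x_{i-1}^q$ for $s<i\leq n$, so the singular locus is exactly the point $P_n = [0:\cdots:0:1]$ for every $s$. Consequently any $[M]\in\mathrm{Aut}(X_s)$ fixes $P_n$, and after projective normalization the last column of $M$ equals $(0,\ldots,0,1)^t$. For $s\leq n-1$ I would then inspect the tangent cone at $P_n$: in the affine chart $x_n=1$ the lowest-degree term of $F_s$ is $x_{n-1}^q$, so the tangent cone is the hyperplane $H=\{x_{n-1}=0\}$. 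Since any automorphism fixing $P_n$ must preserve its tangent cone, the induced linear action on $T_{P_n}\P^n$ must stabilize $H$; this forces the first $n-1$ entries of the $(n-1)$-th row of $M$ to vanish, putting $M$ into the block shape displayed in the theorem.

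With $M$ in this shape and $s\leq n-2$, the core of the proof is a direct expansion of ${^t}MW_sM^{(q)}=\lambda W_s$ in the $(n-1)+1+1$ block decomposition. The matrix $W_s$ itself decomposes with three nonzero blocks: the $(1,1)$-block $W'_s$, the $(2,1)$-block $(0,\ldots,0,1)$, and the $(3,2)$-block equal to $1$. Computing ${^t}MW_sM^{(q)}$ block by block and equating corresponding blocks produces nine equations, five of which are nontrivial: the $(1,1)$-block gives condition (i), the $(1,2)$-, $(2,1)$-, $(2,2)$-blocks give (iv), (iii), (v) respectively, and the $(3,2)$-block gives a scalar relation between $d^q$ and $\lambda$ that combines with the $(1,1)$-block to produce the normalization of $\delta$ and $d$ in (i) and (ii). Sufficiency of (i)-(v) follows by reading the same computation in reverse.

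The boundary cases $s=n-1$ and $s=n$ are analogous but collapse further. For $s=n-1$ the $(2,1)$-block of $W_{n-1}$ is already zero, so the $(2,1)$-block equation becomes $\mathbf{a}T^{(q)}=0$, forcing $\mathbf{a}=0$; the remaining equations then force $\mathbf{c}=0$ and $e=0$, leaving only ${^t}T_{n-1}T_{n-1}^{(q)}=\beta^qI_{n-1}$. For $s=n$, the hypersurface $X_n$ is a cone over a smooth Fermat hypersurface with apex $P_n$ and its tangent cone at $P_n$ is no longer a hyperplane, so I would use an $n+1$ block decomposition instead: the equation ${^t}MW_nM^{(q)}=\lambda W_n$ reduces immediately to ${^t}T_nT_n^{(q)}=\lambda I_n$ (and Proposition \ref{shimadaprop} then applies to $T_n$), while the last row $\mathbf{u}$ remains unconstrained because $W_n$ has no nonzero entries in its last row or column. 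The main obstacle I expect is the scalar bookkeeping in the $s\leq n-2$ case: tracking how the single scalar $\lambda$ of the automorphism condition propagates through both the $(1,1)$- and $(3,2)$-block equations, and matching this with the normalizations of $\delta$ and $d$ demanded by conditions (i) and (ii).
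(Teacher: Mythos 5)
Your proposal follows essentially the same route as the paper: fix the unique singular point $P_0=(0,\dots,0,1)$, put $M$ into the $(n-1)+1+1$ block form with last column $(0,\dots,0,1)^t$, expand ${}^tMW_sM^{(q)}=\delta W_s$ block by block to read off conditions (i)--(v), and let the boundary cases $s=n-1$ and $s=n$ collapse exactly as you describe. The one genuine variation is that you force $\mathbf b=0$ by requiring the induced linear map to preserve the tangent cone $\{x_{n-1}=0\}$ at $P_0$, whereas the paper obtains it directly as the $(3,1)$-block equation $\mathbf b^{(q)}=0$; both are valid, and the geometric argument is a nice alternative. One correction to your scalar bookkeeping, which is the step you flag as delicate: the relation $d=\delta$ is extracted from the $(2,1)$-block equation (condition (iii)) using $s\le n-2$ (so that the last column of $W'_s$ vanishes), not by combining the $(1,1)$- and $(3,2)$-blocks --- the $(1,1)$-block does not involve $d$ at all; the $(3,2)$-block then gives $d^q=\delta$, and it is the combination $d=\delta=d^q$ that yields $\delta^q=\delta$ in (i).
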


We give a brief outline of our paper. In \S 2, we prove Theorem \ref{maintheorem} and its corollaries. In \S 3, we prove Theorem \ref{auttheorem}. In \S 4, we recall the proof of Proposition \ref{shimadaprop} because this proposition plays an important role in the proof of Theorem \ref{maintheorem}. In \S 5, we investigate the plane curve $X_A$ associated with the matrix $A$ of rank $\leq 2$ in the projective plane $\P ^2$, and recover Homma's unpublished work \cite{Homma} (see Remark \ref{remarkHomma}).

\section{Proofs of Theorem \ref{maintheorem} and its corollaries }
We present several preliminary lemmas. The following remark may be helpful in reading the proof of lemmas.
\begin{remark}
Let 
$$T=\left ( \begin{array}{ccc}
t_{00} & \cdots & t_{0n} \\
\vdots &  & \vdots \\
t_{n0} & \cdots & t_{nn}\\
\end{array} \right ) $$
be an invertible matrix. Suppose that $\sum a_{ij}x_ix_j^q=0$ is the equation associated to the matrix $A=(a_{ij})_{0\leq i,j \leq n}$. Then the operation $$A\mapsto ^{t}TAT^{(q)}$$ on the matrix is equivalent to the transformation of the coordinates 
$$x_i\mapsto \sum_{j=0}^n t_{ij}x_j, $$
where $0\leq i\leq n$.
\end{remark}

\begin{lemma}\label{lemma1}
Put 
\begin{equation}
G_{s,r}=\left ( \begin{array}{c|c|c}
I_s            &                     &                       \\ \hline
               & E_r                &                      \\ \hline  
\mathbf a & 0 \cdots 0 \ 1 &                  \\ 
0             & 0  &                    \\ 
\vdots     &  \vdots  &      E_{n-s-r+1}  \\
0             & 0  &             \\ 
\end{array} \right ) ,\nonumber 
\end{equation}
and
\begin{equation}
G_{s,r+2}=\left ( \begin{array}{c|c|c}
I_s            &                     &                       \\ \hline
               & E_{r+2}           &                      \\ \hline  
\mathbf a^{(q^2)} & 0 \cdots 0 \ 1 &                  \\ 
0             & 0  &                    \\ 
\vdots     &  \vdots  &      E_{n-s-r-1}  \\
0             & 0  &             \\ 
\end{array} \right ) ,\nonumber 
\end{equation}
where $s\geq 1, r\geq 0, n-s-r-1\geq 0$, and $\mathbf a$ is a nonzero row vector of dimension $s$. Then 
$$G_{s,r}\sim G_{s,r+2}.$$
\end{lemma}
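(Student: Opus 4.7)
The plan is to verify the equivalence by exhibiting an explicit coordinate change in two steps. Label the coordinates of $G_{s,r}$ as $x_0,\dots,x_{s-1}$ (for $I_s$), $y_1,\dots,y_r$ (for $E_r$), and $z_0,\dots,z_{n-s-r}$ (for the bottom block of size $n-s-r+1$). Reading off the bilinear form entry by entry, the defining equation of $X_{G_{s,r}}$ is
\[
F=\sum_{j=0}^{s-1} x_j^{q+1}+\sum_{i=1}^{r-1} y_{i+1}y_i^q+z_0\,y_r^q+z_0(\mathbf{a}\cdot\mathbf{x}^{(q)})+\sum_{i=0}^{n-s-r-1} z_{i+1}z_i^q,
\]
where $\mathbf{a}\cdot\mathbf{x}^{(q)}:=\sum_j a_j x_j^q$. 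The only ``off-chain'' term is $z_0(\mathbf{a}\cdot\mathbf{x}^{(q)})$; the point of the lemma is that it can be pushed two steps further along the Jordan chain at the cost of replacing $\mathbf{a}$ by $\mathbf{a}^{(q^2)}$.

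The first substitution is $x_j\mapsto x_j-a_j z_0$ for $0\le j\le s-1$. Expanding $(x_j-a_jz_0)^{q+1}$ and using $(\alpha+\beta)^q=\alpha^q+\beta^q$, the Fermat piece $\sum_j x_j^{q+1}$ produces $-z_0(\mathbf{a}\cdot\mathbf{x}^{(q)})-z_0^q(\mathbf{a}^{(q)}\cdot\mathbf{x})+z_0^{q+1}\sum_j a_j^{q+1}$, while the cross term $z_0(\mathbf{a}\cdot\mathbf{x}^{(q)})$ produces $-z_0^{q+1}\sum_j a_j^{q+1}$. The two copies of $z_0(\mathbf{a}\cdot\mathbf{x}^{(q)})$ and of $z_0^{q+1}\sum_j a_j^{q+1}$ cancel in pairs, leaving only the transposed residual $-z_0^q(\mathbf{a}^{(q)}\cdot\mathbf{x})$.

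The second substitution is $z_1\mapsto z_1+\mathbf{a}^{(q)}\cdot\mathbf{x}$. Then $z_1z_0^q$ contributes a new $+z_0^q(\mathbf{a}^{(q)}\cdot\mathbf{x})$, annihilating the residual from step one; at the same time $z_2z_1^q$ (when present) picks up $z_2(\mathbf{a}^{(q)}\cdot\mathbf{x})^q=z_2(\mathbf{a}^{(q^2)}\cdot\mathbf{x}^{(q)})$, which is exactly the cross coupling of $G_{s,r+2}$ with $z_2$ playing the role of the new $\tilde{z}_0$. After renaming $(y_1,\dots,y_r,z_0,z_1)$ as $(\tilde{y}_1,\dots,\tilde{y}_{r+2})$ and $(z_2,\dots,z_{n-s-r})$ as $(\tilde{z}_0,\dots,\tilde{z}_{n-s-r-2})$, the transformed equation is visibly the equation of $X_{G_{s,r+2}}$.

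The composite substitution is a product of two unipotent elementary operations, so its associated matrix $T$ lies in $GL_{n+1}(k)$ (in fact $\det T=1$), giving ${}^{t}T\,G_{s,r}\,T^{(q)}=G_{s,r+2}$ by the remark at the start of Section 2. The only subtlety I anticipate is the handling of the degenerate cases $r=0$ and $n-s-r-1=0$: when $r=0$, the $y$-chain and the term $z_0 y_r^q$ in $F$ are absent, but the relabeling still identifies $z_0,z_1$ with the two variables of the new $E_2$ block; when $n-s-r-1=0$, no $z_2$ exists to receive the pushed-forward coupling, but in that case $G_{s,r+2}$ has empty bottom block and no $\mathbf{a}^{(q^2)}$ coupling either, so the matching is automatic.
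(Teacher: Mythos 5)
Your proof is correct and is essentially the paper's argument: the two elementary substitutions $x_j\mapsto x_j-a_jz_0$ and $z_1\mapsto z_1+\mathbf a^{(q)}\cdot\mathbf x$ are exactly what the paper's single matrix $T_G$ encodes (its $x$-rows carry $-{}^t\mathbf a$ in the $z_0$-column and its $z_1$-row carries $\mathbf a^{(q)}$ in the $x$-columns), and your cancellation computation is the verification of ${}^tT_GG_{s,r}T_G^{(q)}=G_{s,r+2}$ that the paper leaves to the reader. The degenerate cases $r=0$ and $n-s-r-1=0$ are handled the same way the paper does in Remark 2.3.
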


\begin{proof}
By the transformation
\begin{equation}
T_G=\left ( \begin{array}{c|c|c|c|c}
I_s            &               & - {^t}\mathbf a  &          &           \\ \hline
               & I_r            &          &               &    \\ \hline
               &               & 1        &               &    \\ \hline
\mathbf a^{(q)}      &                 &         &   1          &    \\ \hline
              &                 &         &              &   I_{n-s-r-1} \\ 
\end{array} \right ) \nonumber ,
\end{equation}
we have $${^t}T_GG_{s,r}T_G^{(q)}=G_{s,r+2}.$$
\end{proof}

\begin{remark}\label{remark1}
Lemma \ref{lemma1} holds when $r=0$ or $n-s-r-1=0$. In particular, when $n-s-r-1=0$, we have $G_{s,r+2}=W_s$.
\end{remark}
\begin{lemma}\label{lemma2}
Put
\begin{equation}
H_{s,r}=\left ( \begin{array}{c|c|c|c}
D_{s-1}  & -{^t}\mathbf a'' \ 0 \cdots 0 &      &                      \\ \hline
-\mathbf a'         &                                &         &                    \\ 
0         &                                &         &                     \\
\vdots &   E_r                         &          &                     \\
0        &                                &         &                      \\ \hline
          &  0 \cdots 0 \ 1          &1         &                    \\ \hline
         &                                &  1       &                    \\ 
         &                                & 0         &                  \\ 
         &                                & \vdots  & E_{n-s-r+1}  \\
         &                                &  0        &                  \\ 
\end{array} \right ) \nonumber ,
\end{equation}
where $s\geq 1, r\geq 2, n-s-r-1\geq 1$, $D_{s-1}\in M_{s-1}(k)$, $\mathbf a'$ and $\mathbf a'' $ are row vectors of dimension $s-1$. Then $$H_{s,r}\sim H_{s,r+2}.$$
\end{lemma}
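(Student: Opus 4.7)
The strategy is to adapt the argument of Lemma \ref{lemma1}: exhibit an explicit invertible block matrix $T_H\in GL_{n+1}(k)$ such that ${^t}T_H\,H_{s,r}\,T_H^{(q)}=H_{s,r+2}$, with the data $D_{s-1},\mathbf{a}',\mathbf{a}''$ possibly replaced on the right-hand side by their $q^2$-th Frobenius twists (just as $\mathbf{a}$ became $\mathbf{a}^{(q^2)}$ in Lemma \ref{lemma1}). First I would refine the block decomposition of $H_{s,r}$ by splitting the second block column (the one containing $E_r$) into its first column and the remaining $r-1$ columns, and similarly splitting the fourth block column (containing $E_{n-s-r+1}$) into its first two columns and the remaining $n-s-r-1$ columns; the row decomposition is refined in a mirror way. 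With respect to this finer block pattern, $H_{s,r}$ differs from $H_{s,r+2}$ only on a handful of blocks near the middle, and the job of $T_H$ is to shift the vector data across the ``$0\cdots 0\ 1\mid 1$'' bridge row.

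I would then take $T_H$ to be the identity on each refined diagonal block, with non-zero off-diagonal entries only in positions that transport $-\mathbf{a}'$ and $-{^t}\mathbf{a}''$ two steps further into the middle block. By direct analogy with $T_G$ in Lemma \ref{lemma1}, these off-diagonal entries should be essentially $-{^t}\mathbf{a}''$ (above the diagonal, in the column block feeding into the refined split of $E_{n-s-r+1}$) and $\mathbf{a}'^{(q)}$ (below the diagonal, mirroring the first), together with whatever additional scalar entry is needed to eliminate the intermediate $1$'s. The subtlety compared with Lemma \ref{lemma1} is that $H_{s,r}$ carries two independent vector appendages in ``transpose'' positions, so $T_H$ must carry two independent perturbations; one then has to verify that their cross-interactions do not corrupt the $D_{s-1}$ block or leak non-zero entries into the regions that are blank in $H_{s,r+2}$.

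Once the candidate $T_H$ has been written down, checking ${^t}T_H\,H_{s,r}\,T_H^{(q)}=H_{s,r+2}$ reduces to a routine block multiplication: the $D_{s-1}$ block is preserved, the chain of $1$'s telescopes so that $E_r$ absorbs two rows and columns and becomes $E_{r+2}$ while $E_{n-s-r+1}$ shrinks to $E_{n-s-r-1}$, and the vectors $\mathbf{a}',\mathbf{a}''$ are pushed to their new positions, picking up a $q^2$-th power twist coming from the two successive applications of the $(q)$-Frobenius (once from $T_H^{(q)}$ on the right, once from $^tT_H$ on the left after the vectors are moved through the $E_r$-shift). The main obstacle is guessing $T_H$ correctly; once it is fixed, invertibility is automatic from the block-unipotent shape of $T_H$, and each required block identity is a short direct computation.
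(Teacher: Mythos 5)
There is a genuine gap here, and it begins with a misreading of what actually changes between $H_{s,r}$ and $H_{s,r+2}$. In both matrices the vectors $-\mathbf a'$ and $-{^t}\mathbf a''$ occupy the \emph{first} row and \emph{first} column of the $E$-block, i.e.\ row and column number $s-1$ of the whole matrix; since the $E$-block grows downward as $r$ increases, these positions are literally the same in $H_{s,r}$ and $H_{s,r+2}$. Writing out the entries, the two matrices agree everywhere except that the diagonal $1$ of the bridge sits at $(s+r-1,s+r-1)$ in $H_{s,r}$ and at $(s+r+1,s+r+1)$ in $H_{s,r+2}$, the subdiagonal $1$'s being identical. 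So nothing involving $\mathbf a'$ or $\mathbf a''$ needs to be transported, and --- unlike in Lemma \ref{lemma1}, where $\mathbf a$ sits in the row \emph{just below} the $E_r$-block and therefore does move and acquire a $q^2$-twist --- no Frobenius twist can or should appear: the statement (and its use in the proof of Lemma \ref{lemma4}, where $H_{s,2}\sim H_{s,n-s}$ must return the same untwisted $D_{s-1},\mathbf a',\mathbf a''$) requires the data to be unchanged. A $T_H$ with off-diagonal blocks $-{^t}\mathbf a''$ and $\mathbf a'^{(q)}$ modeled on $T_G$ would displace the vectors from positions where they must stay put, so the verification you sketch (``the vectors are pushed to their new positions, picking up a $q^2$-th power twist'') would not land on $H_{s,r+2}$.

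The second problem is that the proposal never actually exhibits $T_H$: since the entire content of the lemma is one explicit matrix identity, ``guess $T_H$ and verify'' without the guess is not yet a proof, and here the guess is the whole difficulty. The correct transformation is far smaller than what you describe: it is the identity outside the three coordinates $x_{s+r-1},x_{s+r},x_{s+r+1}$, where it acts by $x_{s+r}\mapsto -x_{s+r-1}+x_{s+r}+x_{s+r+1}$ with the other two coordinates fixed. Its entries lie in $\{0,\pm1\}$, so $T_H^{(q)}=T_H$ and no twist arises; it is block-unipotent, hence invertible; and a short computation on the five coordinates $s+r-2,\dots,s+r+2$ shows it moves the diagonal $1$ from slot $s+r-1$ to slot $s+r+1$ while leaving every other entry --- in particular $D_{s-1}$, $\mathbf a'$ and $\mathbf a''$ --- untouched.
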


\begin{proof}
By the transformation 
\begin{equation}
T_H=\left ( \begin{array}{c|c|c|c|c}
I_{s+r-1}  &      &      &      &           \\ \hline
             & 1    &      &      &                 \\ \hline
            & -1   & 1   & 1   &                  \\ \hline
            &       &     &  1   &                  \\ \hline
            &       &     &      &  I_{n-s-r-1}   \\ 
\end{array} \right ) \nonumber ,
\end{equation}
we have 
$${^t}T_HH_{s,r}T_H^{(q)}=H_{s,r+2}.$$  
\end{proof}

\begin{lemma}\label{lemma3}
Put
\begin{equation}
H'_{s,r}=\left ( \begin{array}{c|c|c|c|c|c}
D_{s-1} &           &                   &     &             & \\ \hline
-\mathbf a'  & 0         &                  &     &            & \\ \hline 
          & 1         &                  &     &             &  \\ 
          & 0         &                  &     &            & \\
          & \vdots & E_r             &     &            & \\
          & 0        &                   &     &           &  \\ \hline
          &          & 0 \cdots 0 \ 1&0  &1         &     \\ \hline
          &          &                    & 1   & 0        & \\ \hline
          &          &                    &     & 1         &   \\
          &          &                    &     & 0         &    \\ 
          &          &                    &     & \vdots    & E_{n-s-r-1}  \\
          &          &                    &     & 0           &                  \\ 
\end{array} \right ) \nonumber, 
\end{equation}
where $s\geq 1, r\geq 2, n-s-r-3\geq 1$, $D_{s-1}\in M_{s-1}(k)$, and $\mathbf a'$ is a row vector of dimension $s-1$. Then 
$$H'_{s,r}\sim H'_{s,r+2}.$$
\end{lemma}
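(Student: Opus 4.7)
The plan is to mirror Lemma \ref{lemma2}: exhibit an explicit transformation $T_{H'} \in GL_{n+1}(k)$ with ${}^t T_{H'}\, H'_{s,r}\, T_{H'}^{(q)} = H'_{s,r+2}$, and verify it by a direct block calculation. Reading $H'_{s,r}$ as a bilinear form, its nonzero monomials split into three pieces: a top-left contribution $F_0$ coming from $D_{s-1}$ and $-\mathbf{a}'$ (involving only $x_0,\dots,x_{s-1}$), a complete subdiagonal staircase $\sum_{i=s}^{n} x_i x_{i-1}^q$, and one extra above-diagonal monomial $x_{s+r} x_{s+r+1}^q$. The target $H'_{s,r+2}$ has identical $F_0$ and staircase pieces; only the extra monomial moves, becoming $x_{s+r+2} x_{s+r+3}^q$.

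Guided by this, I would take $T_{H'} = I_{n+1} + N$, where $N$ has exactly two nonzero entries, $N_{s+r+2,\,s+r} = -1$ and $N_{s+r+1,\,s+r+3} = +1$; equivalently, apply the coordinate substitutions $x_{s+r+2} \mapsto x_{s+r+2} - x_{s+r}$ and $x_{s+r+1} \mapsto x_{s+r+1} + x_{s+r+3}$ and leave the remaining $x_i$ fixed. Since $\pm 1$ lies in the prime field we have $T_{H'}^{(q)} = T_{H'}$; and since the row-index set $\{s+r+1,s+r+2\}$ and column-index set $\{s+r,s+r+3\}$ of $N$ are disjoint, $N^2 = 0$, so $T_{H'}$ is invertible.

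The verification then reduces to expanding
\[
{}^t T_{H'}\, H'_{s,r}\, T_{H'}^{(q)} = H'_{s,r} + H'_{s,r} N + {}^t N\, H'_{s,r} + {}^t N\, H'_{s,r}\, N
\]
and tracking the three correction terms. A routine block computation, which uses only the four $1$'s of $H'_{s,r}$ at $(s+r,s+r+1)$, $(s+r+1,s+r)$, $(s+r+2,s+r+1)$, $(s+r+3,s+r+2)$, shows that the correction terms are supported on exactly four positions, namely $(s+r,s+r+1)$, $(s+r+2,s+r+3)$, $(s+r,s+r+3)$, and $(s+r+3,s+r)$. The first changes by $-1$ (killing the old extra $1$), the second by $+1$ (producing the new extra $1$), and at the last two the contributions from the three correction terms cancel. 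The block $F_0$ is untouched because neither coordinate substitution involves any variable occurring in $F_0$.

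The main obstacle is spotting the correct $T_{H'}$. In Lemma \ref{lemma2} a single shear suffices because the extra $1$ of $H_{s,r}$ sits on the diagonal and can be absorbed by subtracting one adjacent row; in $H'_{s,r}$ the extra $1$ sits above the diagonal and must be translated both two rows down and two columns to the right. This requires a pair of coordinated shears acting from opposite sides, and the key point is that the cross-terms they would otherwise produce cancel against entries already present on the staircase.
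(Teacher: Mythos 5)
Your transformation is exactly the paper's: the matrix $T_{H'}=I_{n+1}+N$ with $N_{s+r+2,\,s+r}=-1$ and $N_{s+r+1,\,s+r+3}=+1$ is precisely the $T_{H'}$ given in the paper's proof of Lemma~\ref{lemma3}, and your verification (the two cross-terms cancelling against the staircase entries, the old extra $1$ at $(s+r,s+r+1)$ being killed and the new one at $(s+r+2,s+r+3)$ being created) is correct. Same approach, correctly executed; nothing further to add.
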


\begin{proof}
By the transformation
\begin{equation}
T_{H'}=\left ( \begin{array}{c|c|c|c|c|c}
I_{s+r}     &    &     &   &     &       \\ \hline
             & 1 &    &    &      &       \\ \hline
             &    &  1 &    & 1  &     \\ \hline
             & -1&     & 1 &      &     \\ \hline
             &    &     &    & 1   &      \\ \hline
             &    &    &    &      & I_{n-s-r-3}    \\ 
     
\end{array} \right ) \nonumber ,
\end{equation}
we have
$${^t}T_{H'}H'_{s,r}T_{H'}^{(q)}=H'_{s,r+2}.$$ 
\end{proof}

\begin{remark}
Lemma \ref{lemma2} and ~\ref{lemma3} will be used only in the case where $n-s+1$ is odd. Hence we do not need to prove the case $n-s-1=0$ in Lemma \ref{lemma2} nor the case $n-s-3=0$ in Lemma \ref{lemma3}.
\end{remark}

\begin{lemma}\label{lemma4}
Put 
\begin{equation}
P_s= \left ( 
\begin{array}{c|c}
I_s &  \\ \hline
\mathbf a &    \\
0  &    \\
\vdots & E_{n-s+1}\\
0  & \\
\end{array} \right ) \nonumber,
\end{equation}
where $s\geq 1, n-s+1 \geq 1,$ and $\mathbf a $ is a nonzero row vector of dimension $s$. Then
\begin{enumerate}
\item If $n-s+1$ is even, then $P_s \sim W_s.$
\item If $n-s+1$ is odd, then
\begin{equation}
P_s \sim B_{s-1}= \left ( 
\begin{array}{c|c}
D_{s-1}   &         \\ \hline
\mathbf b_{s-1} &    \\  
      0    &    \\
  \vdots & E_{n-s+2} \\
     0      &   \\
\end{array} \right ) \nonumber,
\end{equation}
\end{enumerate}
where $D_{s-1}\in M_{s-1}(k)$, $\mathbf b_{s-1}$ is the row vector of dimension $s-1$. 
In particular, if $s=1$ and $n$ is odd, then $P_1\sim W_0$.
\end{lemma}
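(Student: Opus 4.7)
The plan is to realize $P_s$ as $G_{s,0}$ from Lemma~\ref{lemma1} and then iterate Lemma~\ref{lemma1} for as long as possible. Indeed, when $r=0$ the middle $E_r$ block in $G_{s,r}$ is empty and the indicated row $(\mathbf a\mid 0\cdots 0\,1\mid \cdot)$ collapses to $(\mathbf a\mid 0\cdots 0)$, which is precisely the $\mathbf a$-row of $P_s$. Hence by Lemma~\ref{lemma1}, $P_s=G_{s,0}\sim G_{s,2}\sim G_{s,4}\sim\cdots$, and this chain continues as long as the hypothesis $n-s-r-1\ge 0$ holds.

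If $n-s+1$ is even, then $n-s-1$ is even and the chain reaches $r=n-s-1$; one further application of Lemma~\ref{lemma1} falls into the boundary case $n-s-r-1=0$ of Remark~\ref{remark1}, producing $G_{s,n-s+1}=W_s$. This proves part~(1).

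If instead $n-s+1$ is odd, then $n-s-1$ is odd and the chain stops at $r=n-s-2$, leaving $P_s\sim G_{s,n-s}$. A direct expansion of $G_{s,n-s}$ shows that it agrees with $W_s$ except for one extra nonzero row vector (a $q$-power iterate of $\mathbf a$) inserted in the last row, in columns $0$ through $s-1$. My plan is then to perform one more explicit linear substitution that absorbs the diagonal entry at $(s-1,s-1)$ and the extra vector into a fresh subdiagonal edge at $(s,s-1)$, thereby converting the matrix into a member of the families $H_{s,r}$ or $H'_{s,r}$ of Lemmas~\ref{lemma2} and~\ref{lemma3}. These families differ from $B_{s-1}$ only by one stray diagonal entry, which Lemmas~\ref{lemma2} and~\ref{lemma3} shift by $\pm 2$; iterating until the stray entry falls off the end of the chain leaves a matrix of the required form $B_{s-1}$. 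The ``in particular'' assertion for $s=1$, $n$ odd is then immediate: when $s=1$ both the $0\times 0$ block $D_0$ and the length-$0$ vector $\mathbf b_0$ are empty, so $B_0=E_{n+1}=W_0$.

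The main obstacle is the very first step of the odd case: identifying the explicit coordinate change that carries $G_{s,n-s}$ onto one of the $H_{s,r}$ or $H'_{s,r}$ templates. Once that bridge is in place, the remainder of case (2) is a formal iteration of Lemmas~\ref{lemma2} and~\ref{lemma3}, directly parallel to the even-case iteration of Lemma~\ref{lemma1}.
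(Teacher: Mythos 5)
Your part (1) is correct and is exactly the paper's argument: $P_s=G_{s,0}$ and the chain $G_{s,0}\sim G_{s,2}\sim\cdots\sim G_{s,n-s+1}=W_s$ from Lemma \ref{lemma1} and Remark \ref{remark1}.

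Part (2), however, contains a genuine gap, and it is precisely where all the work of this lemma lies. You reduce to $G_{s,n-s}$ (which is $W_s$ with an extra vector $\mathbf a^{(q^{2k})}$ in the \emph{last} row) and then state that some unspecified substitution carries this onto one of the templates $H_{s,r}$ or $H'_{s,r}$; you yourself flag this bridge as "the main obstacle." That bridge is the content of the proof, not a routine verification: note that in $H_{s,r}$ and $H'_{s,r}$ the auxiliary data $D_{s-1}$, $-\mathbf a'$, $-{}^t\mathbf a''$ sit in the top-left corner and the top-left block is a genuinely new matrix $D_{s-1}=I_{s-1}+{}^t\mathbf a''\cdot\mathbf a'$ (not $I_{s-1}$), whereas your $G_{s,n-s}$ carries its extra vector in row $n$; producing the $H$-shape requires first normalizing $\mathbf a$ (permutation and scaling so its last entry is $1$) and then an explicit transformation that creates $D_{s-1}$. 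Your description of $H_{s,r}$ and $H'_{s,r}$ as "differing from $B_{s-1}$ only by one stray diagonal entry" is also not accurate, and reaching $B_{s-1}$ after the chains of Lemmas \ref{lemma2} and \ref{lemma3} terminate requires further explicit coordinate changes, not merely letting "the stray entry fall off the end." For comparison, the paper does not pass through $G_{s,n-s}$ at all in the odd case: it works directly from $P_s$, normalizes $\mathbf a$, applies a transformation $T_1$ producing $D_{s-1}$, treats the boundary cases $n-s+1=1$, $s=1$, and $n-s-1=1$ separately, and then interleaves five more explicit matrices $T_3,\dots,T_7$ with the two shift lemmas before landing on $B_{s-1}$. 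None of these steps, nor the case analysis, appears in your proposal, so part (2) and the final assertion $P_1\sim W_0$ (which depends on it) remain unproved.
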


\begin{proof}
(1) Suppose that $n-s+1$ is even. Using Lemma \ref{lemma1} and Remark \ref{remark1}, we have $$P_s=G_{s,0}\sim G_{s,n-s+1}=W_s.$$

(2) Next suppose that $n-s+1$ is odd. By interchanging the coordinates $x_0, \cdots , x_{s-1}$, and scalar multiplication of the coordinates $x_s, \cdots , x_n$ if nessesary, we can show that 
\begin{equation}
P_s \sim P'_s= \left ( 
\begin{array}{c|c|c|c}
I_{s-1}    &           &     &  \\ \hline
            & 1         &    &  \\ \hline
 \mathbf a'  & 1         & 0  &  \\ \hline
            &          & 1    &  \\
            &          & 0  &  \\
            &          & \vdots & E_{n-s}\\   
            &         & 0        &  \\
\end{array} \right ) \nonumber,
\end{equation}
with $\mathbf a'$ being a row vector of dimension $s-1$. By the transformation
\begin{equation}
T_1=\left ( \begin{array}{c|c|c|c}
I_{s-1}    &            &     &  \\ \hline 
-\mathbf a''   & 1         &    &  \\ \hline
            &            & 1  &  \\ \hline
            &            &     &  I_{n-s}\\     
\end{array} \right ) \nonumber,
\end{equation}
with $\mathbf a''^{(q)}=\mathbf a'$, we have
\begin{equation}
Q_s={^t}T_1P'_sT_1^{(q)}= \left ( 
\begin{array}{c|c|c|c}
D_{s-1}   &  -{^t}\mathbf a''  &     &  \\ \hline 
-\mathbf a'   & 1                &     &  \\ \hline
            & 1                & 0  &  \\ \hline
            &                  & 1    &  \\
            &                  & 0  &  \\
            &                  & \vdots & E_{n-s}\\   
            &                 & 0        &  \\         
\end{array} \right ) \nonumber,
\end{equation}
where $D_{s-1}=I_{s-1}+{^t}\mathbf a'' \cdot \mathbf a'$. If $n-s+1=1$, by the transformation 
\begin{equation}
T_2=\left ( 
\begin{array}{c|c|c}
I_{n-1}    &    &       \\ \hline 
            & 1 &      \\ \hline
\mathbf a''  &-1 & 1   \\      
\end{array} \right ) \nonumber,
\end{equation}
we have
$${^t}T_2Q_nT_2^{(q)}=B_{n-1}.$$
Suppose that $n-s+1>1$. Note that, since we are in the case where $n-s+1$ is odd, we have $n-s+1\geq 3$. By the transformation
\begin{equation}
T_3=\left ( 
\begin{array}{c|c|c|c|c}
I_{s-1}    &    &     &  & \\ \hline 
            & 1 &     &  & \\ \hline
            &-1& 1  &1  & \\ \hline
            &    &    &1   &  \\ \hline
            &    &    &   & I_{n-s-1} \\         
\end{array} \right ) \nonumber,
\end{equation}
we have
\begin{equation}
Q'_s={^t}T_3Q_sT_3^{(q)}=\left ( 
\begin{array}{c|c|c|c|c}
D_{s-1}   &-{^t}\mathbf a'' &      &    & \\ \hline 
-\mathbf a'   & 0             &      &     & \\ \hline
            & 1             & 0   &     & \\ \hline
            &               &  1   &1   & \\ \hline
            &               &      &1   &      \\
            &               &      &0   &      \\
            &               &      &\vdots  & E_{n-s-1} \\      
            &               &      &0    &  \\   
\end{array} \right ) = H_{s,2} \nonumber.
\end{equation}
Using Lemma \ref{lemma2}, we have 
\begin{equation}
Q'_s=H_{s,2}\sim H_{s,n-s}=Q''_s=\left ( 
\begin{array}{c|c|c|c}
D_{s-1}   &-{^t}\mathbf a'' \ 0 \cdots 0 &      &     \\ \hline 
-\mathbf a'          &                                 &      &      \\ 
0          &                                 &       &      \\
\vdots  &   E_{n-s}                     &        &    \\
0          &                                &        &     \\ \hline
            & 0 \cdots 0 \ 1           & 1     &     \\ \hline
            &                                &  1   &0   \\
\end{array} \right ) \nonumber.
\end{equation}
Then by the transformation 
$$T_4= \left (\begin{array}{c|c|c}
I_{n-1} &  &  \\ \hline
         &1 &  \\ \hline
         &-1 &1 \\
\end{array} \right ) ,$$ we have
$$R_s={^t}T_4Q''_sT_4^{(q)}=\left (
\begin{array}{c|c}
D_{s-1} & -{^t}\mathbf a'' \ 0 \cdots 0 \\ \hline
-\mathbf a'        &            \\
0        &             \\
\vdots & E_{n-s+2} \\
0        &                \\
\end{array} \right ) . $$ 
If $s=1$, $R_1\sim W_0$. Suppose that $s>1$. By the transformation
\begin{equation}
T_5=\left ( 
\begin{array}{c|c|c|c|c}
I_{s-1}    &    &     &   & \\ \hline 
            & 1 &     &1  & \\ \hline
 \mathbf a''  &   & 1  &   & \\ \hline
            &    &    &1   &  \\ \hline
            &    &    &   & I_{n-s-1} \\         
\end{array} \right ) \nonumber,
\end{equation}
we obtain
\begin{equation}
R'_s={^t}T_5R_sT_5^{(q)}=\left ( 
\begin{array}{c|c|c|c|c}
D_{s-1}   &      &      &    & \\ \hline 
-\mathbf a'          & 0   &      &     & \\ \hline
            & 1   & 0   &1   & \\ \hline
            &      &  1   &0   & \\ \hline
            &      &      &1   &      \\
            &      &      &0   &      \\
            &      &      &\vdots  & E_{n-s-1} \\      
            &       &      &0    &  \\   
\end{array} \right ) \nonumber.
\end{equation}
If $n-s-1=1$, by the tranformation
$$T_6=\left (
\begin{array}{c|c|c|c}
I_{n-2} & & & \\ \hline
         &1 & & \\ \hline
         &  & 1 & \\ \hline
         &-1& & 1 \\
\end{array}\right ), $$
we have $${^t}T_6R'_{n-2}T_6^{(q)}=B_{n-3}.$$ Suppose that $n-s-1>1$. Then by the transformation 
$$T_7=\left ( 
\begin{array}{c|c|c|c|c|c}
I_s & & & & & \\ \hline
    & 1& & & & \\ \hline
    & & 1& & 1 & \\ \hline
    & -1& & 1& & \\ \hline
    & & & & 1 & \\ \hline
    & & & & & I_{n-s-3} \\
\end{array} \right ), $$
we have 
$$R''_s={^t}T_7R'_sT_7^{(q)}=\left ( 
\begin{array}{c|c|c|c|c|c}
D_{s-1}   &      &                     &     &     &   \\ \hline 
-\mathbf a'  & 0   &                     &     &      & \\ \hline
            &1    &                     &      &     & \\ 
            &0    &       E_2         &      &      & \\ \hline
            &     &   0 \ 1 & 0   & 1  & \\ \hline
            &      &                    &  1  & 0     &\\ \hline
            &     &                      &      & 1  & \\
            &     &                    &         &0  & \\
            &    &                     &          & \vdots & E_{n-s-3} \\
            &    &                    &            &     0  &\\
\end{array} \right ) =H'_{s,2} \nonumber.$$
Using Lemma \ref{lemma3}, we have 
\begin{equation}
R''_s=H'_{s,2}\sim H'_{s,n-s-2}=R'''_s=\left ( 
\begin{array}{c|c|c|c|c|c}
D_{s-1}   &      &                     &     &     &   \\ \hline 
-\mathbf a'  & 0   &                     &     &      & \\ \hline
            &1    &                     &      &     & \\ 
            &0    &                     &      &      & \\
            &\vdots & E_{n-s-2}   &       &     &  \\
            &0    &                  &        &      & \\ \hline
            &     & 0 \cdots 0 \ 1 & 0   & 1  & \\ \hline
            &      &                    &  1  & 0     &\\ \hline
            &     &                      &      & 1  & 0 \\
\end{array} \right ) \nonumber.
\end{equation}
It is easy to see that 
$${^t}T_6R'''_sT_6^{(q)}=B_{s-1}.$$ 
\end{proof}

\begin{lemma}\label{lemma5}
Put 
$$ B_s= \left( 
\begin{array}{c|c}
D_s     &  \\ \hline
\mathbf b_{s}    &    \\
0        &    \\
\vdots &    E_{n-s+1} \\
0        &        \\
\end{array} \right ) , $$
where $s\geq 1,\ n-s+1\geq 1$, $D_s\in M_s(k)$, and $\mathbf b_s$ is a row vector of dimension $s$. Suppose that the rank of $B_s$ is $n$. Then
$$B_s\sim W_s=\left ( 
\begin{array}{c|c}
I_s &  \\ \hline
   & E_{n-s+1}
\end{array} \right ) ,$$
or 
$$B_s\sim B_{s-1}= \left( 
\begin{array}{c|c}
D_{s-1}     &  \\ \hline
\mathbf b_{s-1}      &    \\ 
0            &    \\
\vdots     &    E_{n-s+2} \\
0            &        \\
\end{array} \right ) , $$
where $D_{s-1}\in M_{s-1}(k)$, and $\mathbf b_{s-1}$ is a row vector of dimension $s-1$.
\end{lemma}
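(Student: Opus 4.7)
The plan is to split the argument by the rank of the top-left block $D_s$. Since $\operatorname{rank}(B_s) = n$ and the $E_{n-s+1}$-block has rank $n-s$, the $(s+1) \times s$ matrix obtained by stacking $\mathbf b_s$ underneath $D_s$ must have column rank $s$. Hence either (A) $\operatorname{rank}(D_s) = s$, or (B) $\operatorname{rank}(D_s) = s - 1$ and $\mathbf b_s$ is linearly independent of the rows of $D_s$.

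In Case (A), I would apply Proposition \ref{shimadaprop} to the nondegenerate matrix $D_s \in M_s(k)$ to obtain $T_1 \in GL_s(k)$ with ${}^t T_1 D_s T_1^{(q)} = I_s$, and transform $B_s$ by $T = \operatorname{diag}(T_1, I_{n-s+1})$. This preserves the zero blocks and the $E_{n-s+1}$-block, replaces $D_s$ by $I_s$, and sends $\mathbf b_s$ to $\mathbf b_s T_1^{(q)}$. If the latter is zero I reach $W_s$ directly; otherwise the matrix is exactly in the form $P_s$ of Lemma \ref{lemma4}, so $P_s \sim W_s$ or $P_s \sim B_{s-1}$ according to the parity of $n-s+1$.

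Case (B) is the crux. Let $\mathbf u \in k^s$ span the one-dimensional right kernel of $D_s$. Because the rows of $D_s$ together with $\mathbf b_s$ span $k^s$, their common annihilator is $\{0\}$, and in particular $\mathbf b_s \cdot \mathbf u \neq 0$. The key observation is the identification
$$W := \bigl\{v \in k^s : \textstyle\sum_i b_i v_i^q = 0\bigr\} = \bigl\{v \in k^s : \textstyle\sum_i b_i^{1/q} v_i = 0\bigr\},$$
which holds because in a field $x^q = 0$ iff $x = 0$; thus $W$ is a genuine $k$-linear hyperplane of dimension $s-1$, and $\mathbf u^{(1/q)} \notin W$ because the defining condition evaluates to $\mathbf b_s \cdot \mathbf u \neq 0$. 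I would take $T_1 \in GL_s(k)$ to have a basis of $W$ as its first $s-1$ columns and $\mathbf u^{(1/q)}$ as its last column. After transforming by $T = \operatorname{diag}(T_1, I_{n-s+1})$, the last column of the new top block equals ${}^t T_1 D_s \mathbf u = 0$, and $\mathbf b_s T_1^{(q)} = (0,\dots,0,\beta)$ with $\beta = \mathbf b_s \cdot \mathbf u \neq 0$.

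A final diagonal rescaling $T_2 = \operatorname{diag}(1,\dots,1,\beta^{-1/q},1,\dots,1)$, with nontrivial entry at position $s-1$, normalizes $\beta$ to $1$, preserves the zero last column of the top block, rescales only its last row, and leaves the $E_{n-s+1}$-block untouched. The resulting matrix has a single $1$ at position $(s, s-1)$ with zeros elsewhere in row $s$ and column $s-1$; this $1$ together with the $1$'s of $E_{n-s+1}$ at $(s+1, s), \dots, (n, n-1)$ forms the subdiagonal pattern of $E_{n-s+2}$, and the remaining top-left data fits the $B_{s-1}$ shape with $D_{s-1}$ the upper-left $(s-1) \times (s-1)$ block and $\mathbf b_{s-1}$ the first $s-1$ entries of its last row. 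The main obstacle is engineering a single $T_1$ in Case (B) that simultaneously zeros the last column of $D_s$ and straightens $\mathbf b_s$ to $(0,\dots,0,\beta)$; this hinges on identifying the Frobenius-twisted vanishing locus $W$ with an ordinary linear hyperplane via $q$-th roots.
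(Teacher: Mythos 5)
Your proposal is correct. The nondegenerate case (your Case (A)) is exactly the paper's first case: apply Proposition \ref{shimadaprop} to $D_s$, conjugate by $\operatorname{diag}(T_D, I_{n-s+1})$, and hand the resulting $P_s$ to Lemma \ref{lemma4}. Your dichotomy is also the right one: since the first $s$ columns of $B_s$ and the $E_{n-s+1}$-columns have disjoint row supports, $\operatorname{rank}(B_s)=n$ forces the stacked matrix $\bigl(\begin{smallmatrix} D_s \\ \mathbf b_s\end{smallmatrix}\bigr)$ to have full column rank, so $\operatorname{rank}(D_s)\geq s-1$ and $\mathbf b_s\cdot\mathbf u\neq 0$ for a kernel vector $\mathbf u$. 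Where you genuinely diverge is the degenerate case. The paper permutes coordinates so that the last row of $D_s$ is dependent, kills it with an explicit shear $T'$ built from the dependency vector $\mathbf w$, observes that the complementary $s\times s$ minor $Q$ is invertible, and normalizes it by solving $QQ'^{(q)}=I_s$ --- three successive transformations. You instead build a single $T_1$ whose last column is $\mathbf u^{(1/q)}$ and whose remaining columns span the Frobenius-twisted hyperplane $\{v:\sum_i b_i v_i^q=0\}$, identified with an honest $k$-linear hyperplane via $q$-th roots (legitimate since $k$ is algebraically closed, hence perfect); this simultaneously zeroes the last column of ${}^tT_1D_sT_1^{(q)}$ and straightens $\mathbf b_s T_1^{(q)}$ to $(0,\dots,0,\beta)$, and a diagonal rescaling by $\beta^{-1/q}$ at position $s-1$ finishes. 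Both arguments land on $B_{s-1}$ with $D_{s-1}$ the upper-left block and $\mathbf b_{s-1}$ the (rescaled) last row of the transformed $D_s$-block; yours is more economical and makes the role of the right kernel of $D_s$ and of perfectness of $k$ explicit, while the paper's keeps everything in terms of elementary block operations and a second appeal to the $f_B$-surjectivity trick behind Proposition \ref{shimadaprop}.
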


\begin{proof}
Suppose that $\det D_s\neq 0$. By Proposition \ref{shimadaprop}, there exists a linear transformation of coordinates $T_D \in GL_s(k)$ such that ${^t}T_D D_sT_D^{(q)}=I_s.$
By the transformation
$$T= \left ( 
\begin{array}{c|c}
T_D &  \\ \hline
    & I_{n-s+1} \\
\end{array} \right ),$$
we have
$${^t}TB_sT^{(q)}=\left (
\begin{array}{c|c}
I_s  &  \\ \hline
\mathbf b'_s &   \\
0    &   \\
\vdots &   E_{n-s+1} \\
0 &  \\
\end{array} \right ), $$
where $\mathbf b'_s=\mathbf b_sT_D^{(q)}$. If $\mathbf b'_s=0$, then $B_s\sim W_s$. Suppose that $\mathbf b'_s\neq 0$. By Lemma \ref{lemma4}, we have $B_s\sim W_s$, or $B_s\sim B_{s-1}$.

Suppose that $\det D_s =0$. Then one row of the matrix $D_s$ is a linear combination of the other rows. By interchanging coordinates $x_0, \cdots ,x_{s-1}$ if nessesary, we can assume that the $s$-th row is a linear combination of the other rows. We write the matrix $D_s$ as 
$$D_s=\left( 
\begin{array}{c|c}
P  & {^t}\mathbf g \\ \hline
\mathbf h  & d \\
\end{array} \right ),$$
where $P\in M_{s-1}(k)$, $\mathbf g,\mathbf h$ are row vectors of dimension $s-1$, $d\in k$ , and that satisfy $\mathbf h=\mathbf wP, d=\mathbf w{^t}\mathbf g$ with $\mathbf w$ being a row vector of dimension $s-1$. Then
$$B_s\sim B'_s =\left (
\begin{array}{c|c|c}
P & {^t}\mathbf g  &  \\ \hline
\mathbf h & d &  \\ \hline
\mathbf f & e &   \\
0 & 0 & \\
\vdots & \vdots & E_{n-s+1} \\
0 & 0 & \\
\end{array} \right ), $$ 
where $\mathbf f$ is a row vector of dimension $s-1$, and $e\in k$. By the tranformation 
$$T' = \left (
\begin{array}{c|c|c}
I_{s-1} & -{^t}\mathbf w & \\ \hline
         & 1       &  \\ \hline
         &          & I_{n-s+1} \\
\end{array} \right ), $$ we obtain
$$B''_s={^t}T'B'_sT'^{(q)}=\left (
\begin{array}{c|c|c}
P & -P\cdot {^t}\mathbf w^{(q)}+{^t}\mathbf g &  \\ \hline
   &                          & \\ \hline
\mathbf f  &-\mathbf f\cdot {^t}\mathbf w^{(q)}+e       &  \\
0 & 0                       & \\
\vdots & \vdots &   E_{n-s+1} \\
0  & 0   &    \\ 
\end{array} \right ). $$
Put $$ Q=\left ( 
\begin{array}{c|c}
P & -P\cdot {^t}\mathbf w^{(q)}+{^t}\mathbf g   \\ \hline
\mathbf f  &-\mathbf f\cdot {^t}\mathbf w^{(q)}+e         \\
\end{array} \right ).$$
Because the rank of $B'_s$ is $n$, we have $\det Q\neq 0$. Let $Q'\in GL_{s}(k)$ such that $QQ'^{(q)}=I_s$,
$$Q'=\left (
\begin{array}{c|c}
P' & {^t}\mathbf g' \\ \hline
\mathbf f' & e' \\
\end{array} \right ), $$
where $P'\in M_{s-1}(k)$, $\mathbf g',\mathbf f'$ are row vectors of dimension $s-1$, $e'\in k$.
By the transformation 
$$T''=\left (
\begin{array}{c|c|c}
P' & {^t}\mathbf g'  & \\ \hline
\mathbf f' & e'    &    \\ \hline
   &      &  I_{n-s+1} \\
\end{array} \right ), $$
we obtain 
$${^t}T''B''_sT''^{(q)}=\left (
\begin{array}{c|c|c}
{^t}P' &  &  \\ \hline
\mathbf g'     & 0  &  \\ \hline 
       & 1  & \\
       & 0  &  \\
       &\vdots & E_{n-s+1} \\
       & 0  &   \\
\end{array}\right ) .$$
Putting $D_{s-1}={^t}P'$ and $\mathbf b_{s-1}=\mathbf g'$, we have $B''_s \sim B_{s-1}$.  
\end{proof}

\begin{remark}\label{remark2}
When $s=1$, we have $$B_{s-1}=B_0=E_{n+1}=W_0.$$
\end{remark}

Now we prove Theorem \ref{maintheorem} and Corollary \ref{cor1}. 
\begin{proof}
Because the rank of the matrix $A$ is $n$, Proposition \ref{shimadaprop} implies that the hypersurface $X_A$ is singular. By using a linear transformation of coordinates if nessesary, we can assume that $X_A$ has a singular point $(0, \cdots , 0,1)$. Then we have $a_{in}=0$ for any $0\leq i \leq n$. The matrix $A$ is now of the form
$$A=\left ( \begin{array}{c|c}
D_n &  \\ \hline
\mathbf b_n &  \\
\end{array} \right )=B_n, $$
where $D_n\in M_n(k)$, and $\mathbf b_n$ is a row vector of dimension $n$. Using Lemma \ref{lemma5} repeatedly and Remark \ref{remark2}, we have that the hypersurface $X_A$ is isomorphic to one of the hypersurfaces defined by the matrixes $W_s$ with $0\leq s\leq n$. 

If $A$ is general, then $\det (D_n)\neq 0$, and hence by the first paragraph of the proof of Lemma \ref{lemma5} and Lemma \ref{lemma4}, we have $A\sim W_{n-1}$.

Next we prove that $s\neq s'$ implies $W_s \not\sim W_{s'}$. For this, we introduce some notions. Let $X_s ^n$ be the hypersurface defined by the matrix $W_s$ in the projective space $\P ^n$. The defining equation of $X_s ^n$ can be written as
$$F_qx_n+ F_{q+1}=0,$$
where 
$$F_q=\begin{cases} 0 \ &\text{if} \ s=n \\ x_{n-1}^q\ & \text{if}\  s<n, \\ \end{cases} $$ and 
$$F_{q+1}=\begin{cases} x_0^{q+1}+\cdots +x_{n-1}^{q+1} \ & \text{if} \ s=n \\ x_0^{q+1}+\cdots +x_{s-1}^{q+1}+x_s^qx_{s+1}+\cdots +x_{n-2}^qx_{n-1}\ & \text{if} \ s<n. \\\end{cases}$$
It is easy to see that $X_s ^n$ has only one singular point $P_0=(0,\cdots , 0,1)$. The variety of lines in $\P ^n$ passing through $P_0$ can be naturally identified with the hypersurface $\mathcal{H}_{\infty }=\{x_n=0\}$ in $\P ^n$ by the correspondence $Q\in \mathcal{H}_{\infty }$ to the line $\overline{QP_0}$. Let $\varphi $ be the map defined by
\begin{eqnarray} \varphi : \P ^{n}\setminus \{P_0\} &\longrightarrow \  \P ^{n-1} \nonumber \\ P &\longmapsto \  \overline{PP_0}. \nonumber \end{eqnarray} 
Let $\overline{X_s ^n}=\varphi (X_s ^n \setminus \{ P_0\} )$.  For $Q=(y_0,\cdots ,y_{n-1},0)\in \mathcal{H}_{\infty }$, we consider the line 
$$l=\overline{QP_0}=\{(\lambda y_0,\cdots ,\lambda y_{n-1},\mu) \mid  (\lambda ,\mu )\in \P ^1\}.$$ 
We have $l\in \overline{X_s^n}$ if and only if there exists $P=(p_0, \cdots , p_{n-1}, p_n) \in X_s ^n\setminus \{ P_0 \}$ satisfying $P\in l$, i.e. there exists an element $\mu \in k$ such that $$(p_0,\cdots , p_{n-1},p_n)=(y_0,\cdots , y_{n-1}, \mu ),$$ for some $P\in X_s^n\setminus \{P_0\}$, or equivalently there exists an element $\mu \in k$ such that  
$$F_q(y_0,\cdots ,y_{n-1})\mu +F_{q+1}(y_0,\cdots ,y_{n-1})=0.$$
Then 
$$\varphi ^{-1}(l)\cap (X_s^n\setminus \{P_0\})=\begin{cases}
\emptyset  & \parbox[t]{4.5cm}{$\text{if} \ F_q(y_0,\dots ,y_{n-1})=0 \ \text{and} \\ \ \ \  F_{q+1}(y_0,\dots ,y_{n-1})\neq 0, $}\\
\{ \text{a single point} \} & \text{if}  \ F_q(y_0,\dots ,y_{n-1})\neq 0, \\
l\setminus \{P_0 \} & \parbox[t]{4.5cm}{$\text{if}\ F_q(y_0,\dots ,y_{n-1})=0 \ \text{and} \\ \ \ \   F_{q+1}(y_0,\dots ,y_{n-1})=0.$}\\
\end{cases}$$
Putting $V_s=\{ F_q=0,\ F_{q+1}=0 \}\subset \P ^{n-1}$, and $H_s=\{ F_q=0 \} \subset \P ^{n-1}$, we have
\begin{equation} \label{maineq} \nonumber
V_s=\begin{cases}
X_s ^{n-2} \ & \text{if} \ s\leq n-2 ,\\ \text{nonsingular Fermat hypersurface in}\ \P ^{n-1}\ & \text{if}\ s=n,\\ 
\text{nonsingular Fermat hypersurface in} \  \P ^{n-2}\ &\text{if}\  s=n-1 ,\end{cases}
\end{equation}
where $X_s ^{n-2}$ is the hypersurface in $\P ^{n-2}$ associated with the matrix 
$$\left ( \begin{array}{c|c} I_s &  \\ \hline  & E_{n-s-1} \\ \end{array} \right ). $$
For any $s\neq s'$, suppose that $X_s^n$ and $X_{s'}^n $ are isomorphic and let $\psi : X_s ^n \longrightarrow X_{s'}^n$ be an isomorphism. 
Because each of $X_s ^n$ and $X_{s'} ^n $ has only one singular point $P_0$, we have $\psi (P_0)=P_0$, and hence $\psi $ induces an isomorphism $\overline{\psi }$ from $\overline{{X_s ^n}}$ to $\overline{X_{s'}^n}$. For any line $l\in \overline{{X_s ^n}}$ and $l'\in \overline{X_{s'}^n}$ such that $\overline{\psi}(l)=l'$, we have 
$$\sharp (\varphi ^{-1}(l)\cap (X_s^n\setminus \{P_0\}))=\sharp (\varphi ^{-1}(l')\cap (X_{s'}^n\setminus \{P_0\})).$$
Thus $V_s\cong V_{s'}$ and $H_s\cong H_{s'}$. Hence for any $s\neq s'$, if $V_s \not\cong V_{s'}$ or $H_s \not\cong H_{s'}$ then $X_s^n \not\cong X_{s'}^n$. 

In the case $n=1$, we have that $X_0^1$ consists of two points, and $X_1^1$ consists of a single point. In the case $n=2$, we have that $X_0^2$ consists of two irreducible components, $X_1^2$ is  irreducible, and $X_2^2$ consists of $(q+1)$ lines. Hence, in the case $n=1$ and $n=2$, we see that $s\neq s'$ implies $W_s \not\sim W_{s'}$. By induction on $n$, we have the proof.  
\end{proof}

Next we prove Corollary \ref{cor2}.

\begin{proof}
Under the condition $n\geq 2,s<n$ and $(n,s)\neq (2,0)$, we have $x_{n-1}$ does not divide $F_{q+1}$, and hence $V_s$ is of codimension 2 in $\P ^{n-1}$. By induction on $n$, $X_s^n$ is irreducible. The morphism 
$$\varphi |_{X_s^n\setminus \{ P_0 \} } : X_s^n\setminus \{ P_0 \} \longrightarrow \mathcal{H}_{\infty } \cong \P ^{n-1} $$
is birational with the inverse rational map 
$$Q=(y_0,\cdots , y_{n-1},0) \longmapsto \left (y_0,\cdots ,y_{n-1}, -\frac {F_{q+1}(y_0,\cdots ,y_{n-1})}{y_{n-1}^q}\right ).$$ 
\end{proof}

\section{Proof of Theorem \ref{auttheorem}}
For any $s\leq n-2$, the matrix $W_s$ can be written
$$W_s= \left (
\begin{array}{c|c|c}
W'_s & & \\ \hline 0 \cdots 0 \ 1 & 0 & \\ \hline & 1 & 0 \\
\end{array} \right ). $$
For any $g\in {\rm Aut}(X_s)$, we have $g(P_0)=P_0$ because $X_s$ has only one singular point $P_0=(0, \cdots ,0,1)$. The automorphism $g$ is defined by a matrix of the form
$$ M=\left ( 
\begin{array}{c|c|c}
T & {^t}\mathbf a & 0 \\ \hline \mathbf b & d & 0 \\ \hline \mathbf c & e & 1 \\
\end{array} \right ) ,$$
where $T\in M_{n-1}(k)$, $\mathbf a, \mathbf b,\mathbf c$ are row vectors of dimension $n-1$, $d,e\in k$.
We have ${^t}MW_sM^{(q)}=\delta W_s$ for some $0\neq \delta \in k$ implies
\begin{numcases}{}
{^t}TW'_sT^{(q)}=\delta W'_s \\
\left [ \mathbf a W'_s+d(0,\cdots ,0,1) \right ] \cdot T^{(q)}=\delta (0,\cdots , 0,1) \\
{^t}TW'_s\cdot {^t}\mathbf a ^{(q)}+{^t}\mathbf cd^q=0 \\
\left [ \mathbf aW'_s+d(0,\cdots ,0,1)\right ] \cdot {^t}\mathbf a ^{(a)}+ed^q=0 \\
\mathbf b=0 \\
d^q=\delta 
\end{numcases}
By (1), we see that $T$ is a matrix defining an automorphism of $X_s ^{n-2}$ in $\P ^{n-2}$. Because $s\leq n-2$, by (2) we have $d=\delta $. Hence we can calculate $T$ by induction on $n$. The vector $\mathbf a, \mathbf c$ and $d,e$ can be find by using the  equations (2)-(6). Conversely, it is easy to show that if the matrix $M$ satifies the conditions (i)-(v) then it define a projective automorphism of $X_s$. The projective automorphism group of $X_n$ and $X_{n-1}$ is easy to calculate. \qed

\section{Proof of Propostion \ref{shimadaprop}}
For the reader's convenience, we give a proof of Proposition \ref{shimadaprop}, which is based on the argument of \cite{Serre88}, chapter VI. The implications (iv)$\Rightarrow $(iii)$\Rightarrow $(ii)$\Rightarrow $(i) are clear. We will prove (i)$\Rightarrow $(iv). For $B \in GL_{n+1}(k)$, consider the map $f_B$ defined by
\begin{eqnarray}\nonumber
 f_B : GL_{n+1}(k)  & \longrightarrow  & GL_{n+1}(k) \\ \nonumber
T & \longmapsto & {^t}TBT^{(q)}.
\end{eqnarray}
Because the differential of the Frobenius map $F: T\longmapsto T^{(q)}$ is identically zero, we can deduce that 
$$d(f_B)=d({^t}T)BT^{(q)}.$$
Therefore, the tangent map of $f_B$ is surjective for any $B\in GL_{n+1}(k)$. Hence, $f_B$ is generically surjective, and the image of $f_B$ contains a non-empty open subset $U_B$. Let $A$ be any matrix of $M_{n+1}(k)$ such that the hypersurface $X_A$ is nonsingular, i.e. $A\in GL_{n+1}(k)$. Because $GL_{n+1}(k)$ is irreducible, we have $U_A \cap U_I \neq \emptyset$, where $I$ is identity matrix of size $n+1$. There exist $T_1,T_2 \in GL_m(k)$ such that $f_A(T_1)=f_I(T_2)$. Putting $T= T_1T_2^{-1}$, we have ${^t}TAT^{(q)}=I$.  \qed
 
\section{The case of plane curves}
Next we will study the plane curves $X_A$ associated with matrices $A$ of rank $\leq 2$ in the projective plane $\P ^2$. 
\begin{theorem}\label{theoinplane}
Let $A=(a_{ij})_{0\leq i,j\leq 2}\in M_3(k)$ be a nonzero matrix and let $X_A$ be the curve defined by $\sum a_{ij}x_ix_j^q=0$ in $\P ^2$. Suppose that the rank of $A$ is smaller than 3. 
\begin{enumerate}
\item[(i)] When the rank of $A$ is 1, the curve $X_A$ is projectively isomorphic to one of the following curves $$Z_0 : x_0^{q+1}=0,\ or \ \ Z_1: x_0^qx_1=0.$$
\item[(ii)] When the rank of $A$ is 2, the curve $X_A$ is projectively isomorphic to one of the following curves
 $$X_0 : x_0^qx_1+x_1^qx_2=0, \ or \ \ 
 X_1: x_0^{q+1}+x_1^qx_2=0,  \ or \ \  
 X_2: x_0^{q+1}+x_1^{q+1}=0.$$
\end{enumerate} 
\end{theorem}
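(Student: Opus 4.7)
The plan is to split by rank: part (ii), where ${\rm rank}(A)=n=2$, is an immediate specialization of Theorem \ref{maintheorem}, while part (i), where ${\rm rank}(A)=1$, is not covered by that theorem and needs a direct factorization argument.

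For (ii), Theorem \ref{maintheorem} with $n=2$ tells us that $A\sim W_s$ for a unique $s\in\{0,1,2\}$. Writing out
$$W_0=\begin{pmatrix} 0 & 0 & 0 \\ 1 & 0 & 0 \\ 0 & 1 & 0 \end{pmatrix},\qquad W_1=\begin{pmatrix} 1 & 0 & 0 \\ 0 & 0 & 0 \\ 0 & 1 & 0 \end{pmatrix},\qquad W_2=\begin{pmatrix} 1 & 0 & 0 \\ 0 & 1 & 0 \\ 0 & 0 & 0 \end{pmatrix}$$
and reading off the form $\sum a_{ij}x_ix_j^q$ from each recovers $X_0$, $X_1$, $X_2$ verbatim; the accompanying non-isomorphism claim for these three is part of Theorem \ref{maintheorem} as well.

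For (i), suppose ${\rm rank}(A)=1$. Then there exist nonzero row vectors $\mathbf u=(u_0,u_1,u_2)$ and $\mathbf v=(v_0,v_1,v_2)$ with $a_{ij}=u_iv_j$, so the defining form factors as
$$\sum_{i,j}u_iv_jx_ix_j^q=\Bigl(\sum_iu_ix_i\Bigr)\Bigl(\sum_jv_jx_j^q\Bigr)=L_1\cdot L_2^q,$$
where $L_1=\sum_iu_ix_i$, $L_2=\sum_jw_jx_j$, and $w_j\in k$ is a $q$-th root of $v_j$ (available because $k$ is algebraically closed). If $L_1$ and $L_2$ are proportional, a linear change of coordinates sends $L_1\mapsto x_0$ and, after rescaling, turns the equation into $x_0^{q+1}=0$, namely $Z_0$. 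If $L_1$ and $L_2$ are linearly independent, I further change coordinates so that $L_2\mapsto x_0$ and $L_1\mapsto x_1$ (choosing any third coordinate to complete a basis), arriving at $x_0^q x_1=0$, namely $Z_1$. The two normal forms are genuinely distinct because $Z_0$ is set-theoretically a single line, while $Z_1$ is a union of two distinct lines. I do not foresee a serious obstacle: the rank-$2$ case is an immediate corollary of the main theorem, and the rank-$1$ case reduces via factorization to elementary linear algebra over the algebraically closed field $k$.
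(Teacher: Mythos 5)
Your proposal is correct and follows essentially the same route as the paper: part (ii) is read off from Theorem \ref{maintheorem} with $n=2$, and part (i) rests on factoring the rank-one form as $L_1\cdot L_2^q$ (using that $\sum v_jx_j^q$ is the $q$-th power of a linear form over the algebraically closed field $k$) and splitting on whether $L_1$ and $L_2$ are proportional. The only cosmetic difference is that you obtain the factorization directly from $a_{ij}=u_iv_j$, whereas the paper first normalizes the last column of $A$ to zero before factoring; both are fine.
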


\begin{proof} 
In the case the rank of $A$ is 2. By Theorem \ref{maintheorem}, the plane curve $X_A$ is projectively isomorphic to one of the plane curves $X_0$, or $X_1$, or $X_2$.

In the case rank of $A$ is 1. With the same argument of the proof of Theorem \ref{maintheorem}, we can assume that the matrix $A$ is as following form
$$A=\left ( \begin{array}{ccc}  a_{00} & a_{01}& 0 \\ a_{10}& a_{11}& 0\\ a_{20}& a_{21}& 0 \\ \end{array}\right ).$$
By interchanging with $x_0$ and $x_1$ if nessesary, we can assume that $(a_{01},a_{11},a_{21})\neq (0,0,0)$. Because rank of $A$ is 1, there exists $\lambda \in k$ such that $(a_{00},a_{10},a_{20})=\lambda (a_{01},a_{11},a_{21})$. The curve $X_A$ is defined by the equation
$$(a_{00}x_0+a_{10}x_1+a_{20}x_2)(x_0^q+\lambda x_1^q)=0.$$
It is easy to show that $X_A$ is projectively isomorphic to the curve $Z_0$ or $Z_1$.  
\end{proof}
\begin{remark}\label{remarkHomma}
In fact, the case when the plane curve $X_A$ of degree $p+1$ has been proved by Homma in \cite{Homma}.
\end{remark}
Note that the plane curve $X_1$ has a special property such that the tangent line of $X_1$ at every smooth point passes through the point $(0,1,0)$. Therefore the plane curve $X_1$ is strange. Moreover this curve is  irreducible and nonreflexive. In \cite{BaliHefez}, Ballico and Hefez proved that a reduced irreducible nonreflexive plane curve of degree $q+1$ is isomorphic to one of the following curves:
\begin{enumerate}
\item[(1)] $X_I \ : \ x_0^{q+1}+x_1^{q+1}+x_2^{q+1}=0,$
\item[(2)] a nodal curve whose defining equation is given in \cite{Fuka13} and \cite{HoShi},
\item[(3)] strange curves.
\end{enumerate}  
Let $\mathcal{L}$ be the space of all reduced irreducible projective plane curves of degree $q+1$, which is open in the space $\mathcal{P} \cong \P ^{\binom {q+3}2}$ of all projective plane curves of degree $q+1$. Let $\mathcal{L}_{*}$ be the locus of $\mathcal{P}$ consisting of curves isomorphic to $X_I$, and let $\mathcal{L}_1$ be the locus of $\mathcal{P}$ consisting of strange curves. Let $(\xi _J)$ be the homogeneous coordinates of $\mathcal{P}$ where $J=(j_0,j_1,j_2)$ ranges over the set of all ordered triples on non-negative integer such that $j_0+j_1+j_2=q+1$. The point $(\xi _J)$ corresponds to the curve $\sum \xi _Jx^J=0$ where $x^J=x_0^{j_0}x_1^{j_1}x_2^{j_2}$. Then the locus of all curves defined by the equation of the form $\sum a_{ij}x_ix_j^q=0$ is the linear subspace of $\mathcal{P}$ defined by $\xi _J=0$, unless $J\in \{(q+1,0,0), (0,q+1,0),(0,0,q+1),(q,1,0),(q,0,1),(1,q,0),(1,0,q),(0,q,1),(0,1,q)\}$. By Theorem \ref{theoinplane}, we have that because $Z_0,Z_1,X_0,X_2$ are reducible, the closure $\overline{\mathcal{L}_{*}}$ of $\mathcal{L}_{*}$ in $\mathcal{L}$ consists of curves isomorphic to $X_I$ or to $X_1$, and the intersection of  $\overline{\mathcal{L}_{*}}$ and $\mathcal{L}_1$ consist of curves isomorphic to $X_1$.

\bibliography{myrefs}
\bibliographystyle{jplain}

\end{document}